\DeclareSymbolFont{iwonaletters}{OML}{iwona}{m}{it}
\DeclareMathSymbol{\bdel}{\mathalpha}{iwonaletters}{"E}
\DeclareMathOperator*{\argmin}{arg\,min}
\theoremstyle{plain}
\newtheorem{theorem}{Theorem}[section]
\newtheorem{lemma}[theorem]{Lemma}
\newtheorem{assumption}[theorem]{Assumption}
\newtheorem{remark}[theorem]{Remark}
\newcommand{\beginsec}{
\setcounter{equation}{0}
}
\newcommand{\la}{\lambda}
\newcommand{\eps}{\varepsilon}
\newcommand{\al}{\alpha}
\newcommand{\gam}{\gamma}
\newcommand{\kap}{\kappa}
\newcommand{\sig}{\sigma}
\newcommand{\del}{\delta}
\newcommand{\om}{\omega}
\newcommand{\Del}{\mathnormal{\Delta}}
\newcommand{\Ps}{\mathnormal{\Psi}}
\newcommand{\Om}{\mathnormal{\Omega}}
\newcommand{\N}{{\mathbb N}}
\newcommand{\R}{{\mathbb R}}
\newcommand{\Z}{{\mathbb Z}}
\newcommand{\EE}{{\mathbb E}}
\newcommand{\E}{{\mathbb E}}
\newcommand{\PP}{{\mathbb P}}
\newcommand{\calB}{{\mathcal B}}
\newcommand{\calF}{{\mathcal F}}
\newcommand{\calM}{{\mathcal M}}
\newcommand{\calQ}{{\mathcal Q}}
\newcommand{\calP}{{\mathcal P}}
\newcommand{\calR}{{\mathcal R}}
\newcommand{\calS}{{\mathcal S}}
\newcommand{\calT}{{\mathcal T}}
\newcommand{\calX}{{\mathcal X}}
\newcommand{\calZ}{{\mathcal Z}}
\newcommand{\oo}{\overline}
\newcommand{\supp}{{\rm supp}}
\newcommand{\w}{\wedge}
\newcommand{\pl}{\partial}
\newcommand{\To}{\Rightarrow}
\newcommand{\iy}{\infty}
\newcommand{\up}{\uparrow}
\newcommand{\cadlag}{c\`adl\`ag }
\newcommand{\noi}{\noindent}
\newcommand{\ds}{\displaystyle}
\newcommand{\leb}{\text{\rm Leb}}
\newcommand{\F}{F}
\begin{document}

\title{A Durrett-Remenik particle system in $\R^d$
}

\author{Rami Atar}
\address{\hspace{-1.3em}Viterbi Faculty of Electrical and Computer Engineering,
Technion -- Israel Isnstitute of Technology
{\rm August 19, 2025}
} 
%\email{rami@technion.ac.il}

\begin{abstract}
This paper studies a branching-selection model of motionless particles
in $\R^d$, with nonlocal branching,
introduced by Durrett and Remenik in dimension 1.
The assumptions on the fitness function, $F$, and on
the inhomogeneous branching distribution, are mild.
The evolution equation for the macroscopic density is given by an
integro-differential free boundary problem in $\R^d$,
in which the free boundary represents the least $F$-value in the population.
The main result is the characterization of
the limit in probability of the empirical measure process
in terms of the unique solution to this free boundary problem.
{\let\thefootnote\relax\footnote{{
\!\!{\it Key words.}
branching-selection particle systems,
hydrodynamic limits, free boundary problems

{\it Mathematics Subject Classification.}
35R35, 60J80, 82C22
}}}
\end{abstract}

\maketitle

\section{Introduction}

A system of particles living in $\R^d$,
$d\ge1$, that undergo branching and selection, is described as follows.
%The branching
%occurs nonlocally, where a particle at location
%$y$ gives birth at rate 1 to a new particle at a location
%distributed according to a probability density $\rho(y,\cdot)$.
An initial configuration is given by $\{x^i,1\le i\le N\}$, where $x^i$ are $\R^d$-valued random variables. An independent Poisson clock of rate 1 is attached to each particle, according to which it gives birth to new ones.
The location of a particle born from one located at $y$ is drawn according
to a probability density $\rho(y,\cdot)$ defined on $\R^d$. All particles stay put throughout their lifetime. When a particle is born, the particle that has
the least $\F$-value among the living ones (including the newborn)
is removed, where $\F:\R^d\to\R$ is a given `fitness' function.
Tie breaking (which may be needed for the initial particles) occurs according to particle labels.
Note that the number of particles remains $N$ at all times.
The label of the particle that gets removed is transferred
to the new particle, so that at all times, the living particles are labeled
as $1,2,\ldots,N$. The location of particle $i$ at time $t$
is denoted by $X_i(t)$ (suppressing the dependence on $N$),
the configuration measure by $\xi^N_t=\sum\bdel_{X_i(t)}$,
and the empirical measure by $\bar\xi^N_t=N^{-1}\xi^N_t$. This model was introduced and studied in \cite{dur11} for $d=1$ and $\F(x)=x$,
where its hydrodynamic limit was characterized in terms of
an integro-differential free boundary problem (FBP).

This result is extended in
this paper. Under mild assumptions on $\rho$ and $\F$, it is shown
that the empirical measure process converges in probability
to a deterministic path in measure space, whose density uniquely solves
an integro-differential FBP in $\R^d$. This is
the problem of finding a pair $(u,\ell)$, where $\ell:\R_+\to\R$
is \cadlag and the function $u:\R^d\times\R_+\to\R_+$ is $C^{0,1}$ in
$\{(x,t)\in\R\times\R_+:\F(x)>\ell_t\}$, and one has
\[
\begin{cases}\ds
\pl_tu(x,t)=\int_{\R^d}u(y,t)\rho(y,x)dy & \quad (x,t):\F(x)>\ell_t,\,t>0,\\
u(x,t)=0 & \quad (x,t):\F(x)\le\ell_t,\,t>0, \\
\ds
\int_{\R^d}u(x,t)dx=1, & \quad t>0, \\
u(\cdot,0)=u_0,\,\ell_0=\la_0.
\end{cases}
\]
The function $u$ and the free boundary $\ell$ represent the density and, respectively, the least $F$-value in the population at the macroscopic scale, with $u_0$ and $\la_0$ their initial conditions.

The relation between particle system with selection and FBP is an active research field. Particle systems with selection were proposed in \cite{bru06, bru07} as models for natural selection in population dynamics, where the position of a particle on the real line represents the degree of fitness of an individual to its environment, and those that are least fitted get removed. In \cite{dur11}, the aforementioned Durrett-Remenik model for motionless particles was introduced, where, in addition to hydrodynamic limits, traveling wave solutions were also studied.
The monograph \cite{de-masi-book} studied hydrodynamic limits of Brownian particle systems with injection and selection, relating them to FBP. A model for Brownian particles with branching and selection, referred to as $N$-particle branching Brownian motion ($N$-BBM), related to the ones proposed in \cite{bru06, bru07}, was introduced in \cite{mai16}, and studied at the hydrodynamic limit in \cite{de-masi-nbbm, ber19}. A variant of $N$-BBM with nonlocal branching and its relation to FBP was studied in \cite{de-masi-nbbm2}. Other hydrodynamic limit results for branching-selection models were studied in \cite{groisman2021rank, ata25}. Apart from selection models, particle systems that give rise to FBP include a simple exclusion process with boundary behavior \cite{de-masi-excl} and the Atlas model \cite{dem19, ata-bud-25}.

The works mentioned above all address FBP in dimension 1.
In higher dimension, \cite{n.ber18} studied the asymptotic shape
of the cloud of particles of the $N$-BBM with fitness function $\F(x)=\|x\|$ and
$\F(x)=\la\cdot x$. The papers \cite{ber21, ber22bee} considered
the hydrodynamic limit of the $N$-BBM with $\F(x)=-\|x\|$,
characterizing it in terms of a FBP, studying convergence rates
as well as long time behavior.
This treatment used in a crucial way the symmetry of $\F$,
by which the radial projection of the macroscopic dynamics
is governed by an autonomous equation, and thus the motion of the free boundary
is dictated by an equation in one dimension.

Our motivation in this work is to study a branching-selection model for which the macroscopic evolution equation is truly multidimensional, in the sense that it does not reduce to a FBP in one spatial dimension.
This is relevant also from the application viewpoint. For example, in the context of natural selection, consider scores given to different traits, each represented by one of the coordinates $x_i$ of the state $x$ of an individual. In this case, the fitness functions $F(x)=\min_i x_i$ and $F(x)=\max_ix_i$ may be of interest, as they represent scenarios where the worst (resp., best) score among the various traits governs extinction.

The Durrett-Remenik and $N$-BBM models in $\R^d$ are both multidimensional branching-selection models, and so they share some similarities, but there are significant differences as far as their analysis is concerned. The most obvious one is an integro-differential FBP studied here for the former versus a second order parabolic FBP studied in \cite{ber21, ber22bee} for the $N$-BBM.
The tools required are different, where, for example, the very question of existence of classical solutions to the PDE of \cite{ber21, ber22bee} is difficult. Their results not only obtain the hydrodynamic limit but also large time asymptotics, which we do not address here.
In this paper, FBP uniqueness builds on monotonicity of the free boundary $\ell$, whereas this property does not hold in general for $N$-BBM. Finally, for the $N$-BBM, establishing the hydrodynamic limit for general fitness functions under which the dynamics do not reduce to one dimension is a challenging open question.

The rest of this article is organized as follows.
In \S \ref{sec2}, assumptions are introduced, the main result is stated, and the proof is outlined. The proof is given in \S\ref{sec3},
starting with \S\ref{sec31} where uniqueness of solutions to the FBP is proved.
In \S\ref{sec32}, it is shown that the empirical measure processes
form a tight sequence and that limits are supported on solutions
to the FBP, which completes the proof. A construction required for one of the steps in \S\ref{sec32} is provided in the appendix.

%One of the tools that has been used in earlier work
%(e.g.\ \cite{de-masi-book}) to establishing hydrodynamic limits
%for particle systems with selection is the method of barriers.
%These are Trotter-type discrete approximations to the dynamics
%that constitute upper and lower bounds
%on any limit point of the empirical measure process
%and any solution to the FBP, by which
%it possible to show existence of a limit and uniqueness of solutions.
%The argument that we provide in \S\ref{sec32} is also based
%on discrete approximations, however they are different from barriers
%in an important way, that is, they are constructed
%from a given solution. As a result
%these approximations alone do not imply uniqueness.
%Their goal, rather, is to prove that limits satisfy the integro-differential equation.

{\it Notation.}
Denote $\R_+=[0,\iy)$. Let $\calB(\R^d)$ denote the class of Borel subsets of $\R^d$.
In $\R^d$, denote the Euclidean norm by $\|\cdot\|$
and let $\mathbb{B}_r=\{x\in\R^d:\|x\|\le r\}$.
Denote by $\calM(\R^d)$ the space of finite signed Borel measures on $\R^d$
endowed with the topology of weak convergence.
Let $\calP(\R^d)\subset\calM_+(\R^d)\subset\calM(\R^d)$
denote the subsets of probability and, respectively positive measures,
and give them the inherited topologies.
For $\mu,\nu\in\calM_+(\R^d)$,
write $\mu\sqsubset\nu$ if $\mu(A)\le\nu(A)$ for all $A\in\calB(\R^d)$.

For $(X,d_X)$ a Polish space let $C(\R_+,X)$ and $D(\R_+,X)$
denote the space of continuous and, respectively, \cadlag paths,
endowed with the topology of uniform convergence on compacts
and, respectively, the Skorohod $J_1$ topology.
Let $C^\up(\R_+,\R_+)$ denote the subset of $C(\R_+,\R_+)$
of nondecreasing functions that vanish at zero.
For $f:\R_+\to\R^d$ denote
\begin{align*}
w_T(f,\del)&=\sup\{\|f(t)-f(s)\|:0\le s\le t\le (s+\del)\w T\},
\\
\|f\|^*_T&=\sup\{\|f(s)\|:s\in[0,T]\}.
\end{align*}

\section{Assumptions and main result}\label{sec2}
\beginsec

\subsection{Assumptions and marcoscopic dynamics}
Recall that the empirical measure is given by $\xi^N_t=N^{-1}\sum\bdel_{X_i(t)}$.
In particular, the initial empirical measure is $\bar\xi^N_0=N^{-1}\sum\bdel_{x^i}$.
Following is our assumption on $\F$, $\rho$ and $\bar\xi^N_0$.
Denote $\ell^N_0=\min\{F(y):y\in\supp(\xi^N_0)\}$.

\begin{assumption}\label{assn10} (i)
$\F\in C(\R^d,\R)$, $\inf_x\F(x)=-\iy$, $\sup_x\F(x)=\infty$, and
for every $a\in\R$, $\F^{-1}\{a\}$ has Lebesgue measure zero.

(ii)
There exists a probability density $\tilde\rho$ and a constant $\tilde c$
such that $\rho(x,y)\le \tilde c\tilde\rho(y-x)$.
Moreover, $\rho(x,y)$ is continuous in $y$ uniformly in $(x,y)$, and
for $a\in\R$ and $x\in \F^{-1}(a,\iy)$, one has
$\int_{\F^{-1}(a,\iy)}\rho(x,y)dy>0$.

(iii)
As $N\to\iy$, $(\bar\xi^N_0,\ell^N_0)\to(\xi_0,\la_0)$ in probability, 
where the latter tuple is deterministic,
$\xi_0(dx)=u_0(x)dx$ and $\la_0\in\R$.
Moreover, $u_0$ is bounded and continuous on $\F^{-1}(\la_0,\iy)$
(and necessarily vanishes on $F^{-1}(-\iy,\la_0)$), and
for every $\del>0$ and $\la\ge \la_0$, $\int_{\F^{-1}(\la,\la+\del)} u_0(x)dx>0$.
\end{assumption}

Further notation is $\ell^N_t=\min\{F(y):y\in\supp(\xi^N_t)\}$
for the minimal $\F$-value of all living particles at time $t$,
and $J^N$ for the removal counting process. By construction,
$J^N$ is a Poisson process of rate $N$.

Next is an equation for the macroscopic dynamics, in the form of an integro-differential FBP.
Denote by $\calX$ the set of pairs $(u,\ell)$, where $\ell\in D(\R_+,\R)$
and $u:\R^d\times\R_+\to\R_+$ is $C^{0,1}$ in
$\{(x,t)\in\R\times\R_+:\F(x)>\ell_t\}$ and bounded on
$\R^d\times[0,T]$ for any $T$.
Consider the system
\begin{equation}\label{a1}
\begin{cases}\ds
{\it (i)}\hspace{3.5em}
\pl_tu(x,t)=\int_{\R^d}u(y,t)\rho(y,x)dy & \quad (x,t):\F(x)>\ell_t,\,t>0,\\
{\it (ii)}\hspace{3.1em}
u(x,t)=0 & \quad (x,t):\F(x)\le\ell_t,\,t>0, \\
{\it (iii)}\hspace{2.5em}
\ds
\int_{\R^d}u(x,t)dx=1, & \quad t>0, \\
{\it (iv)}\hspace{2.9em}
u(\cdot,0)=u_0,\,\ell_0=\la_0.
\end{cases}
\end{equation}
A solution to \eqref{a1} is defined as a member of $\calX$ satisfying \eqref{a1}.

\subsection{Main result}

\begin{theorem}\label{th3}
Let Assumption \ref{assn10} hold. Then there exists a unique solution
to \eqref{a1}, denoted by $(u,\ell)$. Moreover,
$\ell-\la_0\in C^\up(\R_+,\R_+)$.
Furthermore, $(\bar\xi^N,\ell^N)\to(\xi,\ell)$, in probability,
in $D(\R_+,\calP(\R^d))\times D(\R_+,\R_+)$,
where $\xi_t(dx)=u(x,t )dx$.
\end{theorem}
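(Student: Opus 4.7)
The plan mirrors the two-stage structure outlined by the author. First, establish uniqueness for \eqref{a1}; then prove tightness of $(\bar\xi^N,\ell^N)$ and identify every subsequential limit as a solution, so that uniqueness forces convergence in probability. Monotonicity and continuity of $\ell$ will be byproducts.

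\emph{Uniqueness.} Let $(u,\ell)$ and $(v,m)$ solve \eqref{a1}. Because $\pl_tu(x,t)=\int\rho(y,x)u(y,t)\,dy\ge 0$ on $\{F>\ell_t\}$, the quantity $\phi_\la(t):=\int_{F>\la}u(x,t)\,dx$ is non-decreasing in $t$ for every $\la\in\R$; since $\ell_t=\sup\{\la:\phi_\la(t)=1\}$ by mass conservation, $\ell$ is non-decreasing, and similarly for $m$. Assuming without loss of generality $\ell_t\le m_t$, the vanishing of $u,v$ below their boundaries together with $\int u(\cdot,t)\,dx=\int v(\cdot,t)\,dx=1$ gives
\[
\int_{\ell_t<F\le m_t}u(x,t)\,dx
=\int_{F>m_t}(v-u)(x,t)\,dx
\le\int_{F>\ell_t\vee m_t}|u-v|(x,t)\,dx,
\]
so $\|u(\cdot,t)-v(\cdot,t)\|_{L^1}\le 2\int_{F>\ell_t\vee m_t}|u-v|(x,t)\,dx$, which is the key estimate \eqref{205}. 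On $\{F>\ell_t\vee m_t\}$ both $u$ and $v$ satisfy the same linear equation $\pl_tw(x,t)=\int\rho(y,x)w(y,t)\,dy$, and since $\ell,m$ are non-decreasing this domain shrinks in $t$, producing a non-positive transport boundary contribution. Multiplying by $\mathrm{sign}(u-v)$, applying Fubini, and using $\int\rho(y,x)\,dx=1$ yields $(d/dt)\|u-v\|_{L^1}\le 2\|u-v\|_{L^1}$, and Gronwall with zero initial condition forces $u\equiv v$; mass conservation together with Assumption \ref{assn10}(i) then gives $\ell\equiv m$.

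\emph{Hydrodynamic limit.} Tightness of $(\bar\xi^N,\ell^N)$ in $D(\R_+,\calP(\R^d))\times D(\R_+,\R)$ follows from the fact that each jump of $\bar\xi^N$ moves a single point mass at a rate-$N$ Poisson instant, giving the modulus bound $|\lan\bar\xi^N_t-\bar\xi^N_s,f\ran|\le 2(J^N_t-J^N_s)\|f\|_\iy/N$ and standard equicontinuity; for $\ell^N$ one uses monotonicity together with the tail domination $\rho(y,\cdot)\le\tilde c\tilde\rho(\cdot-y)$ from Assumption \ref{assn10}(ii). To identify limits, I would construct, from a given candidate solution $(u,\ell)$, a Trotter-type discrete-in-time approximation of \eqref{a1}: on a grid of step $h$, first apply pure branching $u\mapsto u+h\,\calL u$ with $\calL f(x)=\int\rho(y,x)f(y)\,dy$, then lift the free boundary to strip off exactly $h$ units of bottom mass. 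The two substeps are (i) show this scheme converges to $(u,\ell)$ as $h\downarrow 0$, and (ii) show that along any subsequence, $\bar\xi^N$ agrees with the scheme up to $o(1)$ errors on $[0,T]$, with high probability for $N$ large. Combined with uniqueness this forces any limit to be $(u,\ell)$, yielding both existence and convergence in probability.

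Monotonicity $\ell-\la_0\in C^\up(\R_+,\R_+)$ comes from the first step of the uniqueness argument (or, equivalently, from the microscopic fact that $\ell^N$ is non-decreasing and passage to the limit); continuity follows because the PDE makes $t\mapsto u(\cdot,t)$ continuous in $L^1$, hence $(\la,t)\mapsto\int_{F>\la}u(x,t)\,dx$ jointly continuous under the no-atom property of Assumption \ref{assn10}(i), which forces $\ell_t$ continuous via $\int u(\cdot,t)\,dx=1$. The main obstacle I anticipate is step (ii) of the identification: since the discrete scheme is constructed from the target solution rather than as a pair of sandwiching barriers, it does not automatically dominate or be dominated by $\bar\xi^N$, so I must carefully estimate the discrepancy between the particle-system evolution over a slab of length $h$ and the two Trotter half-steps, uniformly in the random location of $\ell^N$. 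The uniform-in-$(x,y)$ continuity of $\rho$ in $y$ of Assumption \ref{assn10}(ii) and the non-degeneracy $\int_{F^{-1}(\la,\la+\del)}u_0>0$ from Assumption \ref{assn10}(iii) play essential roles in preventing the microscopic free boundary from lagging behind the macroscopic one.
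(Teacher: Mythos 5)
Your proposal identifies the correct two-stage architecture and the key estimate \eqref{205}, but there are genuine gaps in both stages.

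\textbf{Uniqueness: the monotonicity argument is circular.} You deduce that $\phi_\la(t)=\int_{F>\la}u(x,t)\,dx$ is non-decreasing in $t$ from the non-negativity of $\pl_tu$ on $\{F>\ell_t\}$. This would be valid if $\ell$ were already known to be non-decreasing, which is precisely what you are trying to prove. If $\ell$ were to dip after reaching a local maximum at time $t_0$, then for $\la$ slightly below $\ell_{t_0}$ one has $\phi_\la(t_0)=1$; once $\ell_t<\la$, fresh mass appears in $\{\ell_t<F\le\la\}$ (the PDE acts there and has a strictly positive source by Assumption \ref{assn10}(ii)), so $\phi_\la$ drops below $1$. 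Thus $\phi_\la$ is not a priori monotone. The paper's Lemma \ref{lem51} instead argues by contradiction: if $\ell$ failed to increase past its previous maximum, the strict positivity of $\int_{F^{-1}(\ell_s,\infty)}\rho(y,\cdot)$ together with $\int_{F^{-1}(\ell_s,\ell_s+\del)}u_0>0$ produces extra mass above the boundary and violates mass conservation. Note also that your ``equivalently, from the microscopic fact that $\ell^N$ is non-decreasing'' fallback does not rescue the uniqueness proof, since a general solution of \eqref{a1} need not be a hydrodynamic limit. Your Gronwall step is in the right spirit, but the paper's manipulation of the integral form of the equation (the inequality chain after \eqref{002}) avoids the unjustified differentiation $(d/dt)\|u-v\|_{L^1}$ and the ``non-positive boundary contribution'' hand-wave.

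\textbf{Tightness: incomplete.} The jump bound $|\lan\bar\xi^N_t-\bar\xi^N_s,f\ran|\le 2(J^N_t-J^N_s)\|f\|_\iy/N$ gives a modulus-of-continuity estimate, but you also need a compact-containment condition for the marginals $\bar\xi^N_t$, which the paper obtains by dominating the system with the no-removal branching process $\bar\zeta^N$ and invoking \cite{fou-mel}. Your appeal to ``standard equicontinuity'' for $\ell^N$ also skips the key point: an increase of $\ell^N$ by $\del$ over $[t,t+h]$ forces the removal of all $\xi^N_0$-particles in the slab $F^{-1}[\ell^N_t,\ell^N_t+\del)$, and the paper couples this to the jump count of $J^N$ and the lower bound $\inf_a\int_{F^{-1}(a,a+\del)}u_0>0$ from Assumption \ref{assn10}(iii). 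This is what controls the boundary modulus.

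\textbf{Identification: the proposed Trotter scheme does not close.} You candidly flag step (ii) as the main obstacle, and indeed this is where the paper diverges fundamentally. The paper does not discretize a candidate solution; it builds, for each $N$, two piecewise-constant trajectories $k^N,m^N$ that envelope the actual random free boundary $\ell^N$ (Step 4), and couples the original system with two auxiliary particle systems driven by the same stochastic primitives but with boundaries $k^N$ and $m^N$. Because $k^N<\ell^N<m^N$ pathwise, these auxiliary systems sandwich $\xi^N$ in the sense of \eqref{b5}. The piecewise-constant boundary lets \cite[Proposition 2.1]{dur11} and \cite{fou-mel} identify the limits of the auxiliary systems as $\calZ(\hat k),\calZ(\hat m)$ (Step 3), and the $L^1$-continuity estimate \eqref{b6} for the map $\calZ$ plus the modulus-of-continuity control on $\ell$ drives the gap $\hat m-\hat k$ to zero (Step 5). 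This yields \eqref{b2} without ever needing the Trotter step to agree with $\bar\xi^N$ up to $o(1)$, which is exactly the part you rightly suspect you cannot prove. The key idea you were missing is to anchor the discretization to $\ell^N$ (getting a pathwise sandwich for free) rather than to the unknown limit.
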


\begin{remark}
Most parts of Assumption \ref{assn10} are quite mild. The parts that may require some comment are
the last part of Assumption \ref{assn10}(ii) and the last part of Assumption \ref{assn10}(iii). These are used to show that $\ell$ is monotone whenever $(u,\ell)$ is a solution of the FBP (see Lemma \ref{lem51}), and, respectively, that $\ell^N$ are $C$-tight (see Lemma \ref{lem53}).
\end{remark}

\begin{remark}
Because, as stated above, the $\ell$ component
of any solution to \eqref{a1} is nondecreasing,
\eqref{a1}(i) can be written in integral form as
\[
u(x,t)=u_0(x)+\int_0^t\int_{\R^d}u(y,s)\rho(y,x)dydt,\qquad (x,t):F(x)>\ell_t,t>0.
\]
In a more general setting not covered in this paper,
where the mass conservation condition is replaced by
$\int_{\R^d}u(x,t)dx=m(t)$, with $m(t)$ given,
$\ell$ need not be nondecreasing. In this case
the system \eqref{a1} (with $1$ replaced by $m(t)$ on the r.h.s.\ of (iii))
is not sufficient for characterizing $(u,\ell)$. Roughly speaking,
a boundary condition $u(\ell_t,t)=0$ should be added at times
when $\ell_t$ is decreasing. A precise way to write this is
\[
u(x,t)=u_0(x)1_{\{\tau(t,x)=0\}}
+\int_{\tau(t,x)}^t\int_{\R^d}u(y,s)\rho(y,x)dydt,\qquad (x,t):F(x)>\ell_t,t>0,
\]
where $\tau(x,t)=\inf\{s\in[0,t]:\ell_\theta<x \text{ for all } \theta\in(s,t)\}$.
\end{remark}

\begin{remark}\label{rem:gap}
Our proof corrects a gap in the proof of uniqueness of solutions to the integro-differential FBP in \cite{dur11}
(this is equation (FB),
corresponding to \eqref{a1} above in the case $d=1$ and $F(x)=x$;
see \cite[Remark 2.10]{dur11-arxiv} for details).
Addressing uniqueness via a different approach, our treatment validates the uniqueness statement of \cite{dur11}.
\end{remark}

\subsection{Proof outline}\label{sec23}

The proof of Theorem \ref{th3} is based on the compactness--uniqueness approach, and proceeds in three main steps.

(a) {\it Uniqueness.} This is the content of \S\ref{sec31}, where it is shown that \eqref{a1} has at most one solution.

For a quick sketch of the idea, consider the case
$d=1$ and $F(x)=x$. If $(u,\ell)$ and $(v,m)$
are solutions then, for each $t>0$, $u(x,t)$ vanishes for $x<\ell_t$ and
$v(x,t)$ vanishes for $x<m_t$. This and the fact that $u$ and $v$ have the same mass
imply
\[
\int_\R|u(x,t)-v(x,t)|dx\le 2\int_{\ell_t\vee m_t}^\iy|u(x,t)-v(x,t)|dx.
\]
For both $u$ and $v$, the r.h.s.\ now involves only $(x,t)$
for which the integro-differential equation \eqref{a1}(i) holds.
Integrating it over time allows the use of Gronwall's lemma.

(b) {\it Tightness.}
In particular, in Lemma \ref{lem53} of \S\ref{sec32}, it is shown that $(\bar\xi^N,\ell^N)$ forms a $C$-tight sequence.

(c) {\it Subsequential limits form FBP solutions.} This is shown in the remainder of \S\ref{sec32}.

The idea here is to use the standard fact that normalized configuration measures of particle systems with fixed boundary converge as $N\to\iy$ to solutions to integro-differential equations with fixed boundary.

To exploit this fact, upper and lower piecewise constant (but random) envelopes of the prelimit free boundary $\ell^N$ are selected. Then, auxiliary particle systems, with piecewise constant boundaries given by these envelopes, are constructed. It is argued that the configuration measures of the auxiliary particle systems form lower and upper bounds on $\xi^N_t$, in the sense of measure inequalities. The aforementioned convergence as $N\to\iy$ can then be applied in order to sandwich any subsequential limit of $\bar\xi^N_t$ in terms of solutions to the corresponding integro-differential equations. Because $\ell^N$ are $C$-tight, the upper and lower envelopes can be made arbitrarily close to each other locally uniformly. A lemma showing that the solutions of the auxiliary integro-differential equations are continuous with respect to perturbations in the boundary then completes the argument.

\section{Proof of main result}\label{sec3}
\beginsec

\subsection{Uniqueness}\label{sec31}

Before implementing the argument outlined in \S\ref{sec23} part (a), we need the following.

\begin{lemma}\label{lem51}
Let $(u,\ell)\in\calX$ be a solution of \eqref{a1}.
Then $\ell$ is nondecreasing.
\end{lemma}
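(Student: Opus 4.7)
The plan is to argue by contradiction, exploiting conservation of mass. The key observation is that the right-hand side of \eqref{a1}(i) is nonnegative, so the integro-differential equation can only add mass to $u(\cdot,t)$; the only mechanism compatible with $\int u(x,t)\,dx=1$ is for the free boundary $\ell_t$ to increase and erode the support of $u$. If $\ell$ were to decrease on some interval, no such compensation would occur, the source would keep adding mass, and one would be forced into a violation of mass conservation.

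To implement this, I would suppose for contradiction that $\ell_s>\ell_t$ for some $0\le s<t$, pick $\lambda\in(\ell_t,\ell_s)$, and (after possibly enlarging $t$ slightly using right-continuity of $\ell$ at $t$ to arrange $\ell_r<\lambda$ in a right-neighborhood of $t$) define
\[
r^*:=\sup\{r\in[s,t]:\ell_r\ge\lambda\},
\]
so that $s\le r^*<t$ and $\ell_r<\lambda$ for all $r\in(r^*,t]$. Setting $\Omega:=\{x\in\R^d:F(x)>\lambda\}$, the condition $F(x)>\lambda>\ell_r$ holds on $\Omega\times(r^*,t]$, so the integral form of \eqref{a1}(i) together with Fubini yields
\[
\int_\Omega u(x,r_2)\,dx-\int_\Omega u(x,r_1)\,dx=\int_{r_1}^{r_2}\int_{\R^d}u(y,r)\,g(y)\,dy\,dr\ge 0,\qquad r^*<r_1\le r_2\le t,
\]
with $g(y):=\int_\Omega\rho(y,x)\,dx$. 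Hence $r\mapsto\int_\Omega u(x,r)\,dx$ will be nondecreasing on $(r^*,t]$ and bounded above by $1$.

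The next step would be to establish $\lim_{r\downarrow r^*}\int_\Omega u(x,r)\,dx=1$. By the definition of $r^*$ and \cadlag of $\ell$, either $\ell_{r^*}\ge\lambda$, or there is a sequence $r_n\uparrow r^*$ with $\ell_{r_n}\ge\lambda$; in either case $u=0$ on $\Omega^c$ at those times, giving $\int_\Omega u\,dx=1$ at $r^*$ or along $r_n$. A dominated-convergence argument, based on the boundedness of $u$ on $\R^d\times[0,T]$ and the $C^{0,1}$ regularity in $\{F>\ell_t\}$, will transfer this to the right limit at $r^*$. Combined with the monotonicity above and the bound $\le 1$, this forces $\int_\Omega u(x,r)\,dx\equiv 1$ throughout $(r^*,t]$, so the right-hand side of the displayed identity must vanish, i.e.\ $\int u(y,r)\,g(y)\,dy=0$ for a.e.\ $r\in(r^*,t]$. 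On the other hand, for $y\in\Omega$, Assumption~\ref{assn10}(ii) with $a=\lambda$ gives $g(y)>0$, and $\int_\Omega u(\cdot,r)\,dx=1$ forces $u(\cdot,r)$ to have positive mass on $\Omega$; thus $\int u(y,r)\,g(y)\,dy>0$, a contradiction.

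The main obstacle will be the limit $\lim_{r\downarrow r^*}\int_\Omega u(x,r)\,dx=1$, in particular when $\ell$ has a downward jump at $r^*$ (so that $\ell_{r^*-}\ge\lambda>\ell_{r^*}$): passing the \emph{full mass on $\Omega$} property across a potential jump of $u$ at $r^*$ requires careful use of the $C^{0,1}$ regularity in the open set $\{F>\ell_t\}$ and the uniform boundedness of $u$. All other ingredients are routine consequences of the definition of a solution in $\calX$ and Assumption~\ref{assn10}.
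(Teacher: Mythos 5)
Your overall strategy---contradiction via conservation of mass, using that the right-hand side of \eqref{a1}(i) is nonnegative and that $g(y)=\int_{F^{-1}(\lambda,\infty)}\rho(y,x)\,dx>0$ for $y\in F^{-1}(\lambda,\infty)$---is the same as the paper's, and your Case~A (where $\ell_{r^*}\ge\lambda$) goes through essentially as in the paper's Case~1. The gap is exactly where you flag ``the main obstacle,'' and it is more than a matter of care: your setup does not supply the information needed to close it.

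Concretely, in the downward-jump case ($\ell_{r^*-}\ge\lambda>\ell_{r^*}$) you want to pass $\int_\Omega u(\cdot,r_n)\,dx=1$ along $r_n\uparrow r^*$ to the right limit at $r^*$. Two things break. First, the bound $u\le C$ on $\R^d\times[0,T]$ does not dominate on the (generally unbounded) set $\Omega$, so the ``dominated convergence'' step has no integrable majorant; Fatou goes in the wrong direction, and Scheff\'e would require knowing already that the a.e.\ limit integrates to $1$. Second, and more fundamentally, the paper's corresponding step (its equation \eqref{62}) controls $\int_{F^{-1}(\ell_{s-},\infty)}|u(\cdot,s)-u(\cdot,s_n)|\,dx$ by integrating \eqref{a1}(i) over $[s_n,s]$; this is legitimate precisely because $s$ is chosen so that $\ell_{s-}=L_t=\sup_{[0,t]}\ell$, whence $\ell_\theta\le\ell_{s-}$ on the whole interval $[s_n,s]$ and the equation is valid for every $x$ with $F(x)>\ell_{s-}$. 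Your $r^*=\sup\{r\in[s,t]:\ell_r\ge\lambda\}$ controls $\ell$ only on $(r^*,t]$; on $[r_n,r^*)$ the path $\ell$ may exceed $\ell_{r^*-}$ (even oscillate up and down with jumps accumulating at $r^*$), in which case for $x$ with $\lambda<F(x)\le\sup_{[r_n,r^*)}\ell$ the equation \eqref{a1}(i) is not available at intermediate times and there is no way to integrate it from $r_n$ to $r^*$. The $C^{0,1}$ regularity is stated only on $\{(x,t):F(x)>\ell_t\}$, which in this by-contradiction scenario need not be open, so it gives no left continuity of $u(x,\cdot)$ at $r^*$ for such $x$ either.

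To repair this you would need to replace your last-crossing time $r^*$ by a time where $\ell$ attains its running supremum over $[0,t]$, as the paper does (choosing $s$ with $\ell_s=L_t$, or $\ell_{s-}=L_t$ with $\ell_s<\ell_{s-}$), so that $\ell_\theta\le\ell_{s-}$ holds on all of $[0,s]$; then the $L^1$ estimate $\int_{F^{-1}(\ell_{s-},\infty)}|u(\cdot,s)-u(\cdot,s_n)|\,dx\le s-s_n$ is available and the remaining correction over the shrinking shell $F^{-1}(\ell_{s_n},\ell_{s-})$ is handled by the boundedness of $u$ on the compact time interval (no domination over an unbounded spatial set is needed). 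With that modification the rest of your argument does yield the contradiction.
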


\begin{proof}
Arguing by contradiction, assume
there exists $t>0$ such that $\ell_t<L_t:=\sup_{s\in[0,t]}\ell_s$.
There are two possibilities.

1. There is $s<t$ such that $\ell_s=L_t$.
In this case, $\ell_\theta\le\ell_s$ for all $\theta\in[s,t]$.
If for some $x$ $\F(x)>\ell_s$ then $\F(x)>\ell_\theta$ for all
$\theta\in[s,t]$, and we can integrate \eqref{a1}(i). Thus
\begin{equation}\label{63}
u(x,t)=u(x,s)+\int_s^t\int_{\R^d}u(y,\theta)\rho(y,x)dyd\theta,
\qquad x\in \F^{-1}(\ell_s,\iy).
\end{equation}
We obtain
\begin{align*}
1=\int_{\R^d} u(x,t)dx&\ge\int_{\F^{-1}(\ell_s,\iy)} \Big[u(x,s)
+\int_s^t\int_{\R^d} u(y,\theta)\rho(y,x)dy d\theta \Big]dx\\
&\ge
1 +\int_s^t\int_{\F^{-1}(\ell_s,\iy)}\int_{\F^{-1}(\ell_s,\iy)} u(y,\theta)\rho(y,x)dy dx d\theta.
\end{align*}
For every $y\in \F^{-1}(\ell_s,\iy)$, one has
$\psi(y):=\int_{\F^{-1}(\ell_s,\iy)}\rho(y,x)dx>0$, by Assumption \ref{assn10}(ii).
Since $\ell_s\ge\ell_\theta$ for all $\theta\in[0,t]$,
we have, similarly to \eqref{63}, that
\[
u(y,\theta)=u_0(y)+\int_0^\theta\int_{\R^d}u(y',\theta')\rho(y',y)dy'd\theta',
\qquad y\in \F^{-1}(\ell_s,\iy).
\]
Hence for such $y$ and all $\theta\in[s,t]$ one has $u(y,\theta)\ge u_0(y)$.
Hence the triple integral above is bounded below by
\[
(t-s)\int_{\F^{-1}(\ell_s,\iy)} u_0(y)\psi(y)dy.
\]
But $\int_{\F^{-1}(\ell_s,\iy)}u_0(y)dy>0$ by assumption, hence
the above integral is positive, a contradiction.

2. There is $s\le t$ such that $\ell_{s-}=L_t$, and $\ell_s<\ell_{s-}$.
In this case there exists $t'>s$ such that $\ell_\theta\le\ell_{s-}$ for all $\theta\in[s,t']$.
We first show
\begin{equation}\label{62}
\int_{\F^{-1}(\ell_{s-},\iy)} u(x,s)dx=1.
\end{equation}
Let $s_n\uparrow s$.
Then $\ell_{s_n}\to\ell_{s-}$ and by assumption,
$\ell_{s_n}\le\ell_{s-}$. Now,
\begin{align}\label{64}
\int_{\F^{-1}(\ell_{s-},\iy)}u(x,s)dx
&=\int_{\F^{-1}(\ell_{s_n},\iy)}u(x,s_n)dx
+\int_{\F^{-1}(\ell_{s_n},\iy)}(u(x,s)-u(x,s_n))dx
\\
&\quad-\int_{\F^{-1}(\ell_{s_n},\ell_{s-})} u(x,s)dx.
\notag
\end{align}
The first term on the right is $1$, and the last term converges to zero
as $n\to\iy$. As for the second term,
since for all $\theta\in[s_n,s]$ one has
$\F^{-1}(\ell_{s-},\iy)\subset \F^{-1}(\ell_{\theta},\iy)$,
one can integrate in \eqref{a1}(i) and get
\[
\int_{\F^{-1}(\ell_{s-},\iy)}|u(x,s)-u(x,s_n)|dx
\le\int_{s_n}^s\int_{\R^d}u(y,\theta)dyd\theta=s-s_n.
\]
The above expression and the second term in \eqref{64}
have the same limit by the assumed boundedness of $u$
on $\R^d\times[0,s]$. This shows \eqref{62}.

Using \eqref{62} and $\F^{-1}(\ell_{s-},\iy)\subset \F^{-1}(\ell_{t'},\iy)$,
we have $\int_{\F^{-1}(\ell_{t'},\iy)} u(x,s)dx=1$.
We can thus repeat the argument above in 1, with
$(s,\ell_{s-},t',\ell_{t'})$ in place of $(s,\ell_s,t,\ell_t)$.
Namely,
\begin{align*}
1=\int_{\F^{-1}(\ell_{t'},\iy)} u(x,t')dx&=\int_{\F^{-1}(\ell_{t'},\iy)} \Big[u(x,s)
+\int_s^{t'}\int_{\R^d} u(y,\theta)\rho(y,x)dy d\theta \Big]dx\\
&\ge
1 +\int_s^{t'}\int_{\F^{-1}(\ell_{s-},\iy)}^\iy\int_{\F^{-1}(\ell_{s-},\iy)} u(y,\theta)\rho(y,x)dy dx d\theta.
\end{align*}
The argument now completes exactly as in case 1.
\end{proof}

\begin{lemma}\label{lem52}
Let $(u,\ell)$ and $(v,m)$ be solutions of \eqref{a1}.
Then $(u,\ell)=(v,m)$.
\end{lemma}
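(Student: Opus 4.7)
The plan is to implement the sketch at the beginning of \S\ref{sec31} and then argue separately that $\ell = m$. By Lemma \ref{lem51}, both $\ell$ and $m$ are nondecreasing, a fact used throughout. I would first fix $t > 0$ and assume without loss of generality that $\ell_t \le m_t$. Partitioning $\R^d$ into $\{F \le \ell_t\}$, $\{\ell_t < F \le m_t\}$ and $\{F > m_t\}$, and combining $u(\cdot, t) = 0$ on the first, $v(\cdot, t) = 0$ on the first two, with the common total mass $1$, yields $\int_{\ell_t < F \le m_t} u \, dx = \int_{F > m_t}(v - u) \, dx$ and hence the $\R^d$ analogue of \eqref{205},
\[
\int_{\R^d} |u - v|(x, t) \, dx \le 2 \int_{F(x) > \ell_t \vee m_t} |u - v|(x, t) \, dx.
\]

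For $x$ with $F(x) > \ell_t \vee m_t$, monotonicity of $\ell$ and $m$ gives $F(x) > \ell_s$ and $F(x) > m_s$ for every $s \in [0, t]$, so the integral form of \eqref{a1}(i) from the remark after Theorem \ref{th3} applies to both $u$ and $v$; subtracting (the common $u_0$ drops out) yields
\[
(u - v)(x, t) = \int_0^t \int_{\R^d} (u - v)(y, s) \, \rho(y, x) \, dy \, ds.
\]
Taking absolute values, integrating in $x$ over $\{F > \ell_t \vee m_t\}$, swapping the $x$- and $y$-integrals, using $\int_{\R^d} \rho(y, x) \, dx \le \tilde c$ (from the domination in Assumption \ref{assn10}(ii)), and finally invoking the $\R^d$ analogue of \eqref{205} above, I arrive at
\[
\Phi(t) := \int_{F > \ell_t \vee m_t} |u - v|(x, t) \, dx
\le \tilde c \int_0^t \int_{\R^d} |u - v|(y, s) \, dy \, ds
\le 2 \tilde c \int_0^t \Phi(s) \, ds.
\]
Since $\Phi$ is nonnegative, measurable in $t$, and bounded by $2$, Gronwall's inequality forces $\Phi \equiv 0$, so $u(\cdot, t) = v(\cdot, t)$ almost everywhere for every $t \ge 0$.

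The remaining step, which I expect to be the main obstacle, is deducing $\ell = m$, since the definition of a solution only requires $u$ to vanish on $\{F \le \ell_t\}$ and hence does not make $\ell$ an obvious functional of $u$. Suppose for contradiction that $\ell_t < m_t$ at some $t > 0$. The set $\{\ell_t < F < m_t\}$ is open (by continuity of $F$) and nonempty (since $F$ is surjective by Assumption \ref{assn10}(i)). From $u = v$ a.e.\ together with $v(\cdot, t) \equiv 0$ on $\{F \le m_t\}$, I get $u(\cdot, t) = 0$ a.e.\ on $\{\ell_t < F < m_t\}$; combined with continuity of $u(\cdot, t)$ on $\{F > \ell_t\}$, this promotes to $u(\cdot, t) \equiv 0$ on this open set. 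At any $x_0$ in it, the integral form of \eqref{a1}(i) gives
\[
0 = u(x_0, t) = u_0(x_0) + \int_0^t \int_{\R^d} u(y, s) \, \rho(y, x_0) \, dy \, ds,
\]
and nonnegativity of the two terms on the right forces $u_0(x_0) = 0$. Hence $u_0 \equiv 0$ on $\{\ell_t < F < m_t\}$, contradicting Assumption \ref{assn10}(iii) applied with $\lambda = \ell_t \ge \lambda_0$ and $\delta = m_t - \ell_t$. The symmetric case $m_t < \ell_t$ is handled identically, so $(u, \ell) = (v, m)$.
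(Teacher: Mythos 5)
Your proof is correct. The first half (the Gronwall argument giving $u(\cdot,t)=v(\cdot,t)$ a.e.) is essentially the same as the paper's: the paper packages your ad hoc mass-balance step as the general inequality \eqref{001}, and then both proceed via the integral form of \eqref{a1}(i) over $\{F>\ell_t\vee m_t\}$ and Gronwall. (A cosmetic point: you do not need the domination constant $\tilde c$ here; $\int_{\R^d}\rho(y,x)\,dx=1$ holds outright since $\rho(y,\cdot)$ is a probability density, and this is what the paper uses.) The second half, showing $\ell=m$, is genuinely different. The paper pastes $m$ on $[0,\theta_1)$ with $\ell$ on $[\theta_1,\iy)$ to obtain a trajectory $\hat\ell$ that is still a solution paired with $u$, yet has a downward jump, contradicting the monotonicity established in Lemma \ref{lem51}. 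You instead argue locally: from $u=v$ a.e.\ and $v\equiv0$ on $\{F\le m_t\}$, continuity of $u(\cdot,t)$ on $\{F>\ell_t\}$ upgrades $u\equiv0$ on the open nonempty set $\{\ell_t<F<m_t\}$, and then the integral form at such points forces $u_0\equiv0$ there, directly contradicting Assumption \ref{assn10}(iii) with $\la=\ell_t\ge\la_0$ and $\del=m_t-\ell_t$. Both routes ultimately hinge on the same positivity hypothesis on $u_0$ (the paper through Lemma \ref{lem51}, you directly); your version is more elementary and self-contained, avoiding the need to re-verify that the pasted pair lies in $\calX$ and satisfies all items of \eqref{a1}, while the paper's is slightly slicker in that it reuses the monotonicity lemma as a black box. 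One small omission: having shown $u=v$ a.e.\ and $\ell=m$, you should close by noting that continuity of $u(\cdot,t)$ and $v(\cdot,t)$ on $\{F>\ell_t\}$ together with both vanishing on $\{F\le\ell_t\}$ upgrades the a.e.\ equality to $u=v$ everywhere, which is what the lemma asserts; this is the role of the paper's three-domain continuity remark.
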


\begin{proof}
If $w\in L^1(\R^d)$ and $\int_{\R^d}w(x)dx=0$ then we have
$\|w\|_1=2\|w^+\|_1=2\|w^-\|_1$, where we denote $\|\cdot\|_1=\|\cdot\|_{L^1}$.
If in addition $w\ge 0$ on some domain $D$ then
\[
2\|w^-\|_1=2\|w^-1_{D^c}\|_1\le 2\|w1_{D^c}\|_1.
\]
A similar statement holds with $w\le0$ and $w^+$.
Hence if $w$ is either nonnegative on $D$ or nonpositive on $D$,
\begin{equation}\label{001}
\|w\|_1\le2\|w1_{D^c}\|_1.
\end{equation}

Consider $(x,t)$ such that $\F(x)>\ell_t$. Then $\F(x)>\ell_s$ for all $s\le t$.
Therefore \eqref{a1}(i) is valid with $(x,t)$ replaced by $(x,s)$ for all such $s$, and
\[
u(x,t)=u_0(x)+\int_0^t\int_{\R^d}u(y,s)\rho(y,x)dyds.
\]
Similarly, if $\F(x)>m_t$, the above is satisfied by $v$.
Denote $\Del=u-v$. Consider $(x,t)$ such that $\F(x)>\ell_t\vee m_t$. Then
\begin{equation}\label{002}
\Del(x,t)=\int_0^t\int_{\R^d}\Del(y,s)\rho(y,x)dyds.
\end{equation}
Now, for each $t$, $\int_{\R^d}\Del(x,t)dx=1-1=0$. Moreover, in
$\F^{-1}(-\iy,\ell_t\vee m_t)$, either $u$ or $v$ vanishes,
therefore $\Del(\cdot,t)$ is either nonnegative or nonpositive.
Hence we can apply \eqref{001} and then \eqref{002} to get
\begin{align*}
\int_{\R^d}|\Del(x,t)|dx
&\le2\int_{\F^{-1}(\ell_t\vee m_t,\iy)}|\Del(x,t)|dx
\\
&\le 2\int_{\F^{-1}(\ell_t\vee m_t,\iy)}\int_0^t\int_{\R^d}|\Del(y,s)|\rho(y,x)dydsdx
\\
&\le 2\int_0^t\int_{\R^d}|\Del(y,s)|dyds.
\end{align*}
The above is true for all $t\ge0$, hence by Gronwall's lemma,
the integral on the left vanishes for all $t$.
This shows that for every $t$, $u(x,t)=v(x,t)$ for
a.e.\ $x$. Next, for every $t$, both $u(\cdot,t)$ and $v(\cdot,t)$
are continuous in each of the domains
$\{F(x)\le\ell_t\w m_t\}$, $\{\ell_t\w m_t<F(x)\le\ell_t\vee m_t\}$
and $\{F(x)>\ell_t\vee m_t\}$,
and therefore must be equal everywhere. This shows $u=v$.

It remains to show that $\ell=m$. Since $u=v$,
$(\ell,u)$ and $(m,u)$ are solutions. Arguing by contradition,
assume that, say, $\ell_\theta<m_\theta$ for some $\theta$.
By right continuity, there exists an interval $[\theta,\theta_1]$ such that
\[
\sup_{t\in[\theta,\theta_1]}\ell_t < \inf_{t\in[\theta,\theta_1]}m_t.
\]
Let $\hat\ell$ be defined by $\hat\ell_t=m_t$ for $t<\theta_1$ and
$\hat\ell_t=\ell_t$ for $t\ge \theta_1$. Then $\hat\ell$ is \cadlag.
Moreover, the differential equation \eqref{a1}(i) holds when
$\F(x)>\hat\ell_t$ (because it holds in the larger domain $\F(x)>\ell_t$),
and the vanishing condition \eqref{a1}(ii) holds when $\F(x)\le\hat\ell_t$
(because it holds in the larger domain $\F(x)\le m_t$).
Hence $(\hat\ell,u)$ is a solution.
However, by construction, $\hat\ell$ is not a nondecreasing trajectory,
which contradicts the monotonicity property proved earlier.
This shows that $\ell=m$.
\end{proof}

\subsection{Proof of Theorem \ref{th3}}\label{sec32}

In this section it is proved first, in Lemma \ref{lem53}, that tightness holds,
and then the main remaining task is to show that limits satisfy \eqref{a1}(i). As outlines in \S\ref{sec32}, this is achieved by constructing upper and lower bounds on the density
given in terms of limits of particle systems with piecewise constant boundary,
for which convergence is a consequence of earlier work. These piecewise constant
boundaries are constructed to form upper and lower envelopes of the prelimit
free boundary $\ell^N$.
%(As already mentioned in the introduction,
%these discrete approximations do not form barriers in the sense
%of \cite{de-masi-book} and other literature on the subject,
%in that they are constructed from a given solution, and
%therefore do not suffice for a uniqueness argument).

\begin{lemma}\label{lem53}
The sequence of laws of $(\bar\xi^N,\ell^N)$ is $C$-tight.
Moreover, every subsequential limit $(\xi,\ell)$ satisfies a.s.,
\[
\xi_t(F^{-1}(-\iy,\ell_t))=0, \text{ for all } t\in(0,\iy).
\]
\end{lemma}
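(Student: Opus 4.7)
The plan breaks into three pieces: $C$-tightness of $\{\bar\xi^N\}$ in $D(\R_+,\calP(\R^d))$, $C$-tightness of $\{\ell^N\}$ in $D(\R_+,\R_+)$, and the support property at subsequential limits. Joint $C$-tightness of the pair follows from marginal $C$-tightness, after which the support property is a short Portmanteau argument.

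For $\bar\xi^N$ I would apply Jakubowski's criterion, which reduces the problem to compact containment together with tightness of $\langle f,\bar\xi^N_\cdot\rangle$ in $D(\R_+,\R)$ for $f$ in a countable separating family of bounded Lipschitz test functions. The latter is immediate from the fact that $\bar\xi^N$ is piecewise constant and that each event changes $\langle f,\bar\xi^N\rangle$ by at most $2\|f\|_\infty/N$, combined with the Poisson LLN $J^N_\cdot/N\to\cdot$ uniformly on compacts, which simultaneously forces the jumps of the limit to vanish. For compact containment I would track the expected tail mass $\E[\bar\xi^N_t(\mathbb{B}_R^c)]$: the initial tail is tight by Assumption \ref{assn10}(iii), and the bound $\rho(x,y)\le\tilde c\tilde\rho(y-x)$ implies that the probability a newborn lands outside $\mathbb{B}_R$ is dominated by $\tilde c$ times the convolution of $\bar\xi^N_{t-}$ with $\tilde\rho$, which after a Gronwall argument keeps the tail uniformly small in $N$ and $t\le T$.

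The key structural fact for $\{\ell^N\}$ is monotonicity: at every event the removed particle is the minimizer of $F$ among the $N+1$ pre-event particles and the newborn, so $\ell^N$ is a.s.\ nondecreasing. An upper bound on $\ell^N_T$ then follows from the compact containment of $\bar\xi^N$, since if $\bar\xi^N_t$ puts at least one particle in some fixed compact $K$ for all $t\le T$ (as it does with high probability once $N$ is large), then $\ell^N_t\le\max_{x\in K}F(x)$. The nontrivial step is $C$-tightness, i.e.\ vanishing of the maximal jump of $\ell^N$ on $[0,T]$: a jump at an event is bounded by the gap between the smallest and second-smallest $F$-values in the pre-event population, and I would bound this gap by exploiting the nondegeneracy $\int_{F^{-1}(\la,\la+\del)}u_0(y)dy>0$ from Assumption \ref{assn10}(iii), propagated forward in time by the branching dynamics to guarantee positive macroscopic mass just above $\ell^N_t$ at every time. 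I expect this propagation step to be the main technical obstacle.

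Finally, for the support property, fix a subsequence along which $(\bar\xi^N_n,\ell^N_n)\to(\xi,\ell)$ in law. By $C$-tightness the limit lies in $C(\R_+,\calP(\R^d))\times C(\R_+,\R_+)$, so Skorohod convergence is uniform on compacts and $(\bar\xi^N_{n,t},\ell^N_{n,t})\to(\xi_t,\ell_t)$ for every $t$. By construction $\bar\xi^N_{n,t}$ is supported on $F^{-1}[\ell^N_{n,t},\iy)$, so for any $\eps>0$ one has $\bar\xi^N_{n,t}(F^{-1}(-\iy,\ell_t-\eps))=0$ eventually. Since $F$ is continuous, $F^{-1}(-\iy,\ell_t-\eps)$ is open, and Portmanteau gives $\xi_t(F^{-1}(-\iy,\ell_t-\eps))\le\liminf_n\bar\xi^N_{n,t}(F^{-1}(-\iy,\ell_t-\eps))=0$; letting $\eps\downarrow 0$ and applying monotone convergence yields $\xi_t(F^{-1}(-\iy,\ell_t))=0$.
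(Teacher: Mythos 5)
Your plan for the support property matches the paper's (Portmanteau on the open set $F^{-1}(-\iy,\ell_t-\eps)$, then $\eps\downarrow0$), and for $C$-tightness of $\bar\xi^N$ your Jakubowski route is a legitimate alternative to the paper's Ethier--Kurtz argument, though the compact-containment piece is shakier than you suggest: the bound $\rho(x,y)\le\tilde c\tilde\rho(y-x)$ carries no moment information, so the Gronwall estimate on $\E[\bar\xi^N_t(\mathbb{B}_R^c)]$ does not obviously close (the drift term involves the tail of $\bar\xi^N_t$ on a \emph{smaller} ball). The paper avoids this by coupling with the no-removal branching system $\zeta^N$, whose hydrodynamic limit is already known from \cite{fou-mel}, and dominating $\bar\xi^N\sqsubset\bar\zeta^N$; this also supplies the upper bound on $\ell^N_T$ in one stroke.

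The genuine gap is in the $C$-tightness of $\ell^N$. You propose to bound each jump of $\ell^N$ by the gap between the smallest and second-smallest $F$-values in the current population and to propagate the positive-mass hypothesis on $u_0$ forward in time, which you flag as the main obstacle. This route fails twice over. First, controlling individual jump sizes does not control $w_T(\ell^N,h)$: there are order $Nh$ events on $[t,t+h]$ and small jumps can accumulate. Second, propagating positivity of the density near $\ell^N_s$ for $s>0$ would in effect require the hydrodynamic limit you are trying to prove, so the argument is circular. The paper's key observation sidesteps both: since $\ell^N$ is nondecreasing, an initial particle $x_i$ with $F(x_i)\ge\ell^N_t$ cannot have been removed before time $t$ (removal at $s\le t$ would force $F(x_i)\le\ell^N_{s-}\le\ell^N_t$), so every initial particle in $F^{-1}[\ell^N_t,\ell^N_t+\del)$ is still alive at time $t$. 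For $\ell^N$ to climb by $\del$ on $[t,t+h]$ all of these must be removed, and removals are bounded by $J^N_{t+h}-J^N_t$. This reduces the modulus estimate entirely to $\bar\xi^N_0$ and $\bar J^N$, and then the hypothesis $\int_{F^{-1}(a,a+\del)}u_0>0$ together with continuity of $a\mapsto\int_{F^{-1}(a,a+\del)}u_0$ gives a uniform lower bound on $\bar\xi^N_0(D^N(t,\del))$ over $a\in[\la_0,k]$. No forward propagation is needed, and the argument is not circular.
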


\begin{proof}
We first argue $C$-tightness of $\ell^N$.
Since $J^N$ is a rate-$N$ Poisson process, $\bar J^N=N^{-1}J^N$ converges in probability to the identity map from $\R_+$ to itself.
Next, by construction, the path $t\mapsto\ell^N_t$
is nondecreasing, because when a new particle
is born at $t$ in the domain $\F^{-1}[\ell^N_{t-},\iy)$
one has $\ell^N_t\ge\ell^N_{t-}$, and if it is born outside this domain,
it is removed immediately and $\ell^N_t=\ell^N_{t-}$.
In particular, $\ell^N_t\ge \ell^N_0\to \la_0$ in probability.
Fix $T>0$. We show that the random variables $\ell^N_T$ are tight.
To this end, denote by $\zeta^N$ the configuration measure
associated with non-local branching without removals. That is, they are
constructed as our original system, but without removing any particles,
and the systems are coupled so that at all times, the configuration
of the original system is a subset of that of the enlarged one.
Then there exists a (deterministic) finite measure on $\R^d$,
$\zeta_T$, such that $\bar\zeta^N_T\to\zeta_T$ in probability
by \cite[Theorem 5.3]{fou-mel}. As a result,
there exists a compact $K\subset\R^d$ such that
$\limsup_N\PP(\bar\zeta^N_T(K^c)>\frac{1}{2})=0$.
Let $k=\max\{\F(x):x\in K\}$. Then
$\limsup_N\PP(\bar\zeta^N_T(\F^{-1}(k,\iy))>\frac{1}{2})=0$.
Since $\xi^N_T$ is dominated by $\zeta^N_T$ a.s.,
\[
\PP(\ell^N_T>k)=\PP(\xi^N_T(\F^{-1}(k,\iy))=N)
\le \PP(\bar\zeta^N_T(\F^{-1}(k,\iy))\ge1),
\]
showing $\limsup_N\PP(\ell^N_T>k)=0$.

Next, for $\ell^N$ to increase over a time interval $[t,t+h]$
by more than $\del$, the particles located at time $t$ in
$D^N(t,\del):=\F^{-1}[\ell^N_t,\ell^N_t+\del)$ must be removed by time $t+h$.
Because $\ell^N$ is monotone, all particles
in the initial configuration within the domain $D^N(t,\del)$
are still present at time $t$.
Hence the event $\ell^N_{t+h}>\ell^N_t+\del$
is contained in
\[
\xi^N_0(D^N(t,\del))\le J^N_{t+h}-J^N_t.
\]
As a result, the event $w_T(\ell^N,h)>\del$ is contained in
\[
\inf_{t\in[0,T-h]}\bar\xi^N_0(D^N(t,\del))\le w_T(\bar J^N_t,h).
\]
Hence for any $\del,h,\eps>0$,
\begin{align*}
\PP(w_T(\ell^N,h)>\del)
&\le\PP(\ell^N_T>k)+\PP(w_T(\bar J^N,h)>\eps))
\\
&\quad
+\PP\Big(\inf_{t\in[0,T-h]}\bar\xi^N_0(D^N(t,\del))\le\eps,\,\ell^N_T\le k\Big).
\end{align*}
Thus
\begin{align}\label{202}
\notag
\limsup_N\PP(w_T(\ell^N,h)>\del)
&\le
\limsup_N\PP(w_T(\bar J^N,h)>\eps))
\\
&\quad
+\limsup_N\PP\Big(\inf_{a\in[\la_0,k]}\bar\xi^N_0(\F^{-1}[a,a+\del))<\eps\Big).
\end{align}
Let $\del>0$ be given.
By assumption, $\eta(a,\del):=\int_{\F^{-1}(a,a+\del)} u_0(x)dx>0$,
$a\ge\la_0$.
Moreover, by the boundedness of $u_0$, $a\mapsto\eta(a,\del)$
is continuous. Therefore $\inf_{a\in[\la_0,k]}\eta(a,\del)>0$.
This and the convergence $\bar\xi^N_0\to\xi_0=u_0(x)dx$
in probability show that
one can find $\eps>0$ such that the second term on
the r.h.s.\ of \eqref{202} vanishes.
Given such $\eps$, using the fact that limits of $\bar J^N$ are $1$-Lipschitz,
the first term on the r.h.s.\ of \eqref{202} also vanishes provided
$h$ is sufficiently small. This completes the proof of $C$-tightness
of $\ell^N$.

Next, $C$-tightness of $\bar\xi^N$ is shown.
Denote by $d_{\rm L}$ the Levy-Prohorov metric on $\calP(\R^d)$,
which is compatible with weak convergence on this space. We will show that
(i) for every $\eps>0$ and $t$ there exists a compact set $K_{\eps,t}\subset\calP(\R^d)$
such that $\liminf_N\PP(\bar\xi^N_t\in K_{\eps,t})>1-\eps$;
and (ii)
for every $\eps>0$ there exists $\del>0$ such that
\[
\limsup_N \PP(w_T(\bar\xi^N,\del)>\eps)<\eps.
\]
This will establish $C$-tightness of $\bar\xi^N$ in view of
\cite[Corollary 3.7.4 (p.\ 129)]{ethkur} and because we use $w$ rather than $w'$
\cite[(3.6.2) (p.\ 122)]{ethkur}.

To show (i), let
\[
K_n(r)=\{\gam\in\calP(\R^d):\gam(\mathbb{B}_r^c)<n^{-1}\},
\qquad n\in\N,\,r\in(0,\iy).
\]
By Prohorov's theorem, for any $n_0$ and sequence $\{r_n\}$,
the closure of $K_{\ge n_0}(\{r_n\}):=\cap_{n\ge n_0}K_n(r_n)$,
as a subset of $\calP(\R^d)$, is compact. Suppose we show that
there exists a sequence $r_n$ such that, for every $t\in[0,T]$,
\begin{equation}
\label{203}
\liminf_N\PP(\bar\xi^N_t\in K_n(r_n))\ge1-2^{-n}.
\end{equation}
Then, given $\eps>0$, taking $n_0$ such that
$\sum_{n\ge n_0}2^{-n}<\eps$, it would follow that
\[
\liminf_N\PP(\bar\xi^N_t\in \oo{K_{\ge n_0}(\{r_n\})})>1-\eps,
\]
showing that (i) holds.
To this end, note that the convergence of $\bar\zeta^N_T$ implies that
there exists a sequence $\{r_n\}$ such that
\[
\liminf_N\PP(\bar\zeta^N_T(\mathbb{B}_{r_n}^c)<n^{-1})>1-2^{-n}.
\]
But
\[
\sup_{t\in[0,T]}\bar\xi^N_t(\mathbb{B}_{r_n}^c)
\le\bar\zeta^N_T(\mathbb{B}_{r_n}^c),
\]
hence \eqref{203} follows, and (i) is proved.

Next we show (ii).
Given a set $C\subset\R$ let $C^\eps$ denote its $\eps$-neighborhood.
Let $0\le s<t\le T$ be such that $t-s\le\del$. Then for any Borel set $C$, one has
\[
|\xi^N_t(C)-\xi^N_s(C)|\le J^N_t-J^N_s.
\]
Hence, on the event $w_T(\bar J^N,\del)\le\eps$,
\[
\bar\xi^N_s(C) \le \bar\xi^N_t(C^\eps) + \eps,
\qquad \text{and} \qquad
\bar\xi^N_t(C) \le \bar\xi^N_s(C^\eps)+ \eps,
\]
and thus $d_{\rm L}(\bar\xi^N_s,\bar\xi^N_t)\le\eps$.
This shows that for sufficiently small $\del$,
\[
\limsup_N \PP(w_T(\bar\xi^N,\del)>\eps)\le\eps,
\]
and the proof of (ii) is complete.

For the second assertion of the lemma, note that
by the definition of $\ell^N$, one has for all $N$,
$\xi^N_t(F^{-1}(-\iy,\ell^N_t))=0$.
Invoking Skorohod's representation, one has
$(\bar\xi^N,\ell^N)\to(\xi,\ell)$ a.s.\ along the convergent subsequence.
Hence for $t>0$ and $\eps>0$, because $F^{-1}(-\iy,\ell_t-\eps)$ is open,
\begin{align*}
\xi_t(F^{-1}(-\iy,\ell_t-\eps)) &\le \liminf_N\bar\xi^N_t(F^{-1}(-\iy,\ell_t-\eps))
\\
&\le\liminf_N\bar\xi^N_t(F^{-1}(-\iy,\ell^N))=0,
\end{align*}
a.s.
Taking $\eps\downarrow0$, $\xi_t(F^{-1}(-\iy,\ell_t))=0$, a.s.
To deduce the a.s.\ statement simultaneously for all $t$, it suffices to
note that for $t_n\downarrow t$, by monotonicity of $\ell$, one has
\[
\xi_t(F^{-1}(-\iy,\ell_t))\le\liminf_n\xi_{t_n}(F^{-1}(-\iy,\ell_t))\le\liminf_n\xi_{t_n}(F^{-1}(-\iy,\ell_{t_n}))=0.
\]
\end{proof}

\begin{proof}[Proof of Theorem \ref{th3}]
In view of Lemmas \ref{lem52} and \ref{lem53}, the proof
will be complete once it is shown that for every limit $(\xi,\ell)$
there exists a measurable density $u$ such $(u,\ell)\in\calX$
and $(u,\ell)$ satisfies \eqref{a1}.
Fix a convergent subsequence and denote its limit by $(\xi,\ell)$.

To argue the existence of a density let us go back to
the particle system with no removals,
mentioned in the proof of Lemma \ref{lem53}.
The normalized process $\bar\zeta^N\to\zeta$, in probability, where
$\zeta$ is deterministic and for every $t$, $\zeta_t$
has a density, as follows from
\cite[Theorem 5.3 and Proposition 5.4]{fou-mel}.
Throughout what follows, denote this density
by $z(\cdot,t)$.

For any bounded continuous $g:\R^d\to[0,\iy)$, one has
$\E\int g(x)\bar\xi^N_t(dx)\le\E\int g(x)\bar\zeta^N_T(dx)$, $t\in[0,T]$.
Hence $\E\int g(x)\xi_t(dx)\le\int g(x)\zeta_T(dx)$, $t\in[0,T]$.
In particular, $\xi_t(dx)\ll dx$, $t\le T$, and
since $T$ is arbitrary, this statement holds for all $t$.
Assuming $\xi=0$ outside
the full measure event, we finally obtain that
for every $(t,\om)\in(0,\iy)\times\Om$, $\xi_t(dx,\om)\ll dx$.
We can now appeal to \cite[Theorem 58 in Chapter V (p.\ 52)]{del-mey-82}
and the remark that follows.
The measurable spaces denoted in \cite{del-mey-82}
by $(\Om,\calF)$ and $(T,\calT)$ are taken to be
$(\R^d,\calB(\R^d))$ and $((0,\iy)\times\Om,\calB((0,\iy))\otimes\calF)$, respectively.
According to this result there exists
a $\calB(\R^d)\otimes\calB((0,\iy))\otimes\calF$-measurable function
$u(x,t,\om)$, such that for every $(t,\om)\in(0,\iy)\times\Om$,
$u(\cdot,t,\om)$ is a density of $\xi_t(dx,\om)$ with respect to $dx$.
We also have $u(x,t,\om)\le z(x,t)$.

Items (ii), (iii) and (iv) of \eqref{a1} can be verified plainly:
In view of Lemma \ref{lem53}, $u$ has a version satisfying
$u(x,t)=0$ for all $(x,t)$ such that $F(x)<\ell_t$, and
this extends to $F(x)\le\ell_t$ using Assumption \ref{assn10}(i) by which
$\leb\,F^{-1}\{\ell_t\}=0$ for all $t$.
This verifies \eqref{a1}(ii).
Items (iii) and (iv) are obvious from our assumptions.
It remains to show \eqref{a1}(i),
which by the nondecreasing property of $\ell$ can be expressed as
\begin{equation}\label{b2}
u(x,t)=u_0(x)+\int_0^t\int_{\R^d}u(y,s)\rho(y,x)dyds,
\qquad (x,t):F(x)>\ell_t,t>0.
\end{equation}
Note that as soon as the above is established,
the uniform continuity of $\rho$ and the local integrability of $u$
imply the existence of a version of $u$ for which
$x\mapsto u(x,t)$ and $t\mapsto\pl_tu(x,t)$ are continuous,
hence $(u,\ell)\in\calX$.
We will achieve \eqref{b2} in several steps.

Step 1.
{\it Integro-differential systems with piecewise constant boundary.}
Fix $T>0$ throughout.
For $\del,\eps\in(0,1)$ such that $H:=\del^{-1}T\in\N$,
let $M=M(\del,\eps)$
denote the collection of right-continuous
piecewise constant nondecreasing
trajectories $m:[0,T)\to\R$ such that, with $t_j=j\del$,
for each $j=0,1,\ldots,H-1$, on $[t_j,t_{j+1})$,
$m$ takes a constant value in $\eps\Z$.
Denote $a_j(m)=m(t_j)$.
For $m\in M$ and $a_j=a_j(m)$, consider the set of equations
\begin{equation}\label{b1}
\begin{cases}\ds
\pl_tu(x,t)=\int_{\R^d}u(y,t)\rho(y,x)dy &
(x,t):\F(x)>a_j,\,t\in(t_j,t_{j+1}),\\
u(x,t)=0 & (x,t):\F(x)\le a_j,\,t\in(t_j,t_{j+1}), \\
\ds
u(x,t_j)= u(x,t_j-)1_{\{F^{-1}[a_j,\iy)\}}(x) & 1\le j \le H-1,
\\
u(x,0)=u_0(x)1_{\{F^{-1}[a_0,\iy)\}}(x).
\end{cases}
\end{equation}
It is clear, by induction, that this set uniquely determines a solution,
denoted $u^{(m)}$. If
$\zeta^{(m)}_t(dx)=u^{(m)}(x,t)dx$, $t\in[0,T)$,
then the map $M\ni m\mapsto\zeta^{(m)}\in D([0,T),\calM_+(\R^d))$
is denoted by $\calZ$.

Step 2. {\it A continuity property of $\calZ$}. We prove the following claim.
There exists a function $\gam:\R_+\to\R_+$ such that
$\gam(0+)=0$ and the following holds.
Let $k,m\in M$ and $w=u^{(k)}$, $v=u^{(m)}$.
Denote $\Del=w-v$. Then
\begin{equation}\label{b6}
\sup_{t<T}\|\Del(\cdot,t)\|_1\le\gam(\|m-k\|^*_T),
\qquad k,m\in M.
\end{equation}

To prove the claim, denote
\[
\eta(k,m)=\max_{j\le H-1}\zeta_T(F^{-1}(a_j(k)\w a_j(m),a_j(k)\vee a_j(m))).
\]
Note that
\begin{align}\label{b8}
w(x,t)&=w_0(x)+\int_0^t\int_{\R^d}w(y,s)\rho(y,x)dyds,
\qquad (x,t):F(x)>k_t,t>0,
\\
\label{b9}
v(x,t)&=v_0(x)+\int_0^t\int_{\R^d}v(y,s)\rho(y,x)dyds,
\qquad (x,t):F(x)>m_t,t>0,
\end{align}
and $w$ (resp., $v$) vanishes to the left of $k$ (resp., $m$).
Also note that $w(x,t)\vee v(x,t)\le z(x,T)$. For $t\in[0,T)$,
\begin{align*}
\|\Del(\cdot,t)\|_1 &\le
\|\Del(\cdot,0)\|_1
+\int_{F^{-1}(k_t,\iy)\triangle F^{-1}(m_t,\iy)}(w\vee v)(x,t)dx
\\
&\qquad
+\int_{F^{-1}(k_t\vee m_t,\iy)}\int_0^t\int_{\R^d}
|\Del(y,s)|\rho(y,x)dydsdx
\\
&\le\|\Del(\cdot,0)\|_1+\eta(k,m)+\tilde c\int_0^t\|\Del(\cdot,s)\|_1ds.
\end{align*}
Since $\|\Del(\cdot,0)\|_1\le\eta(k,m)$, Gronwall's lemma gives
\[
\|\Del(\cdot,t)\|_1\le 2\eta(k,m)e^{\tilde cT},
\qquad t<T.
\]
Hence \eqref{b6} will be proved once it is shown that
$U(0+)=0$, where
\[
U(\kap)=\sup\{\zeta_T(F^{-1}(b,b+\kap)):b\in\R\}, \qquad \kap>0.
\]
Assuming the contrary, there exists $\kap'>0$ and
$\{b_n\}$ such that, along a subsequence,
\begin{equation}\label{b7}
\zeta_T(F^{-1}(b_n,b_n+n^{-1}))>\kap'.
\end{equation}
Now, $\{b_n\}$ must be bounded. For if there is a subsequence with $b_n\to\iy$,
by the continuity of $F$, for every compact $K$
the set $F^{-1}(b_n,\iy)\cap K$ must be empty for large $n$,
which shows that \eqref{b7} cannot hold.
If there is a subsequence $b_n\to-\iy$ then
$F^{-1}(-\iy,b_n+n^{-1})\cap K$ must be empty for all large $n$
and again \eqref{b7} cannot hold.
Hence $b_n$ is bounded. Let $b$ be a limit.
Then, given $\kap_1>0$, $(b_n,b_n+n^{-1})\subset(b-\kap_1,b+\kap_1)$
for large $n$ along a subsequence, showing
\[
\kap' \le \zeta_T(F^{-1}(b-\kap_1,b+\kap_1)).
\]
Hence $\zeta_T(F^{-1}\{b\})>0$, contradicting our assumption
$\leb(F^{-1}\{a\})=0$ for every $a$. This proves \eqref{b6}.

Step 3. {\it Particle systems with piecewise constant boundary}.
Given $m\in M$ we construct a particle system
in which, for every $t\in[0,T)$, all particles lie within $F^{-1}[m_t,\iy)$.
Its initial configuration is given by the restriction of $\{x_i\}$
to $F^{-1}[m_0,\iy)$. During any interval $(t_j,t_{j+1})$,
the reproduction is according to $\rho$, but every newborn outside of
$F^{-1}[a_j(m),\iy)$ is immediately removed.
If for $j\in\{1,\ldots,H-1\}$, $t_j$ is a continuity point of $m$,
nothing happens at this time. Otherwise,
$m$ necessarily performs a positive jump,
hence the domain $F^{-1}[m,\iy)$
decreases. At this time, all particles outside $F^{-1}[a_j(m),\iy)$
are removed.
Denote the configuration process by $\zeta^{(m),N}$.

We show that for every $m\in M$,
$\bar\zeta^{(m),N}\to\zeta^{(m)}:=\calZ(m)$ in probability,
uniformly on $[0,T)$.
For the initial condition, we have
$\bar\zeta^{(m),N}_0=\bar\xi^N_01_{\{F^{-1}[a_0,\iy)\}}$.
The boundary of the domain $F^{-1}[a_0,\iy)$ has Lebesgue measure
zero, as follows from Assumption \ref{assn10}(i) upon noting that
$\pl F^{-1}[a,\iy)\subset F^{-1}\{a\}$ for every $a\in\R$.
Because the limit $\xi_0$ has a density, this implies
$\bar\zeta^{(m),N}_0\to\xi_01_{\{F^{-1}[a_0,\iy)\}}$ in probability.

Assume that for $j\ge0$, $\bar\zeta^{(m),N}_{t_j}\to\zeta^{(m)}_{t_j}$.
Then the convergence on the interval $(t_j,t_{j+1})$ to a measure-valued
trajectory which has a density satisfying the integro-differential equation
in \eqref{b1} is as in the proof of \cite[Proposition 2.1]{dur11};
the proof there, based on \cite{fou-mel},
is for $d=1$ and the domain $\R$, but the same proof is valid for
$d\ge1$ an arbitrary domain of $\R^d$, because that is
the generality of the results of \cite{fou-mel}. Because the trajectory
is continuous on $(t_j,t_{j+1})$, the convergence is uniform there.
Next, at the trimming time $t_{j+1}$, the convergence follows
by the same argument as for $t=0$.

Step 4.
{\it Relation to the original particle system}.
Consider now, for each $N$, two stochastic processes,
$k^N$ and $m^N$, with sample paths in $M$, defined as follows.
For $x\in\R$, let $P^-_\eps(x)=\max\{y\in\eps\Z:y\le x\}$,
$P^+_\eps(x)=\min\{y\in\eps\Z:y\ge x\}$, and let
\[
k^N_t=
P^-_\eps\Big(\inf_{s\in[t_j,t_{j+1})}\ell^N_s\Big)-\eps,
\qquad
m^N_t=
P^+_\eps\Big(\sup_{s\in[t_j,t_{j+1})}\ell^N_s\Big)+\eps,
\qquad
t\in[t_j,t_{j+1}).
\]
By construction,
\begin{equation}\label{b3}
k^N_t<\ell^N_t<m^N_t, \qquad t\in[0,T).
\end{equation}
Constructing a particle system with piecewise constant boundary,
as in Step 2, makes perfect sense even when the trajectories are random
members of $M$ that anticipate the future of $\xi^N$, such as $k^N$ and $m^N$. To construct these systems,
one first evaluates $\ell^N$, and based on it, $k^N$ and $m^N$,
and then uses these random trajectories
to construct the particle systems as in Step 2.
A key point is that, thanks to \eqref{b3},
one can couple the three systems in such a way that the two auxiliary configuration measures form lower and upper bounds on $\xi^N$ in the sense of measure inequality. In particular, we have the following.
\begin{lemma}\label{lem0a}
There exists a coupling  such that, a.s.,
\begin{equation}\label{b5}
\zeta^{(m^N),N}_t\sqsubset\xi^N_t\sqsubset\zeta^{(k^N),N}_t,
\qquad t\in[0,T),
\end{equation}
where the two particle configurations corresponding to the piecewise constant boundaries $k^N$ and $m^N$ are denoted by $\zeta^{(k^N),N}$ and $\zeta^{(m^N),N}$.
\end{lemma}
The standard but tedious proof of this lemma appears in the appendix.

%In this coupling, one must be careful to use, in the construction of
%$\zeta^{(m^N),N}$ and $\zeta^{(k^N),N}$, the same stochastic primitives
%(namely, exponential clocks and birth locations) that were used in
%the construction of $(\xi^N,\ell^N)$.
%Relation \eqref{b5} can then be shown by arguing that
%every particle existing in the $\zeta^{(m^N),N}$ (resp., $\xi^N$)
%configuration at any given time also exists at the $\xi^N$ (resp., $\zeta^{(k^N),N}$)
%configuration at that time. Showing that this is true
%during the intervals $(t_j,t_{j+1})$ proceeds by induction on the times when
%particles are born, whereas at the times $t_j$, one notices
%that the trimming operation preserves the order as long as \eqref{b3} holds.
%The construction is straightforward and we therefore skip its precise details.

Since the maps $P^\pm_\eps$ are not continuous,
$k^N$ and $m^N$ may not converge in law. However, the tightness
of the laws of $\ell^N$ and the specific structure of $M$
clearly give tightness of $(k^N,m^N)$. Consider then any convergent subsequence
$(\bar\xi^N,\ell^N,k^N,m^N)\To(\xi,\ell,\hat k,\hat m)$.
We claim that the limit in law of
$(\bar\xi^N,\ell^N,k^N,m^N,\bar\zeta^{(k^N),N},\bar\zeta^{(m^N),N})$
exists and is given by
\[
(\xi,\ell,\hat k,\hat m,\calZ(\hat k),\calZ(\hat m)).
\]
To simplify the notation, we prove the claim only for the 3-tuple
$(\bar\xi^N,m^N,\bar\zeta^{(m^N),N})$;
the proof for the 6-tuple is very similar.
Because the claim is concerned with convergence
of probability measures, it suffices to prove vague convergence.
Let $f:D_T\to\R$ be compactly supported, where
$D_T:=D([0,T],\calM(\R^d))\times D([0,T],\R)\times D([0,T],\calM(\R^d))$.
Let $M_f$ be the set of $m\in M$ for which $f(\cdot,m,\cdot)\ne0$
and note that $\# M_f<\iy$ because
for $c>0$ there are only finitely many $m\in M$ with
$\|m\|^*_T<c$. By the uniform continuity of $f$, one has
$\gam_f(0+)=0$, where
\[
\gam_f(\kap)=\sup\{|f(\al,m,\beta)-f(\al,m,\beta')|:
(\al,m,\beta)\in D_T,\,d_{\rm L}(\beta,\beta')<\kap\}.
\]
Now,
\begin{align*}
\E f(\bar\xi^N,m^N,\bar\zeta^{(m^N),N})
&=\sum_{m\in M_f}\E[1_{\{m\}}(m^N) f(\bar\xi^N,m,\bar\zeta^{(m),N})]
\\
&=
\sum_{m\in M_f}\E[1_{\{m\}}(m^N) f(\bar\xi^N,m,\calZ(m))]
+W^N,
\end{align*}
where
\[
|W^N|\le\sum_{m\in M_f}\EE[\gam_f(d_{\rm L}(\bar\zeta^{(m),N},
\calZ(m))]\to0,
\]
by the convergence in probability proved in Step 3.
Because each $m\in M$ is isolated from all other members of $M$,
the convergence $(\bar\xi^N,m^N)\To(\xi,\hat m)$
implies $\E[1_{\{m\}}(m^N)g(\bar\xi^N)]
\to\E[1_{\{m\}}(\hat m)g(\xi)]$ whenever $g$ is bounded and continuous.
This gives
\[
\E f(\bar\xi^N,m^N,\bar\zeta^{(m^N),N})
\to\sum_{m\in M_f}\E[1_{\{m\}}(\hat m)f(\xi,m,\calZ(m))]
=\E[f(\xi,\hat m,\calZ(\hat m))],
\]
proving the claim.

As a consequence of the above convergence and \eqref{b5},
we obtain a bound on the density $u$, namely,
for every $t$ and a.e.\ $x$,
\begin{equation}\label{b10}
\hat v(x,t) \le u(x,t) \le \hat w(x,t),
\end{equation}
where $\hat v=u^{(\hat m)}$ and $\hat w=u^{(\hat k)}$
are the random densities corresponding to
$\calZ(\hat m)$ and $\calZ(\hat k)$.

Step 5.
{\it Completing the proof}.
Denoting $\tilde\Del=u-\hat v$,
$\hat\Del=\hat w-\hat v$, and $\hat q=\|\hat m-\hat k\|^*_T$,
we have by \eqref{b10}, $\|\tilde\Del(\cdot,t)\|_1
\le\|\hat\Del(\cdot,t)\|_1\le \gam(\hat q)$,
where \eqref{b6} is used.

Note that $\hat w$ and $\hat v$ satisfy \eqref{b8}--\eqref{b9},
with $k$ and $m$ replaced by the random
$\hat k$ and $\hat m$.
Let $(x,t)$ be such that $F(x)>\hat m_t$, by which $F(x)>\ell_t\ge\hat k_t$.
Then
\begin{align*}
u(x,t)&=\hat v(x,t)+\tilde\Del(x,t)\\
&=u_0(x)-\tilde\Del_0(x)+\int_0^t\int_{\R^d}u(y,s)\rho(y,x)dyds
-\int_0^t\int_{\R^d}\tilde\Del(y,s)\rho(y,x)dyds
+\tilde\Del(x,t).
\end{align*}
Denoting, for $(x,t)\in\R^d\times[0,T)$,
\[
\Ps(x,t)=\Big|u(x,t)-u_0(x)-\int_0^t\int_{\R^d}u(y,s)\rho(y,x)dyds\Big|,
\]
we therefore have
\[
\int_{\R^d}\Ps(x,t)1_{\{F(x)>\hat m_t\}}dx
\le (2+\tilde cT)\gam(\hat q).
\]
Moreover,
\[
\Ps(x,t)\le 2z(x,T)+T\int_{\R^d}z(y,T)\tilde c \tilde\rho(x-y)dyds.
\]
Analogously to $U$, define
$\tilde U(\kap)=\sup\{\int_{F^{-1}(b,b+\kap)}\tilde\rho(x)dx: b\in\R\}$,
and note, by similar reasoning, that $\tilde U(0+)=0$.
Then
\[
\int_{\R^d}\Ps(x,t)1_{\{\ell_t<F(x)\le\hat m_t\}}dx
\le 2U(\hat q)+T\tilde c \,|\zeta_T|\, \tilde U(\hat q).
\]
Hence there exists a constant $c$ such that
\[
\psi:=\sup_{t<T}\int_{\R^d}\Ps(x,t)1_{\{F(x)>\ell_t\}}dx
\le c(\gam(\hat q)+U(\hat q)+\tilde U(\hat q)).
\]
Given $\eps,\del'>0$ choose $\del>0$ so small that
$\PP(\Om_{\del,\eps})>1-\del'$, where $\Om_{\del,\eps}=\{w_T(\ell,\del)<\eps\}$.
By the definition of $m^N$ one has for every $j$,
$m^N_{t_j}\le \sup_{[t_j,t_{j+1})}\ell^N+2\eps$, hence
$\hat m_{t_j}\le\sup_{[t_j,t_{j+1})}\ell+2\eps$. An analogous statement
holds for $\hat k$. This gives, on $\Om_{\del,\eps}$,
$\hat m_t-\hat k_t\le5\eps$ for all $t<T$, hence $\hat q\le 5\eps$.
As a result,
\[
\PP(\psi > c(\gam(5\eps)+U(5\eps)+\tilde U(5\eps)))<\del'.
\]
Finally, $\del'\to0$ followed by $\eps\to0$ shows $\psi=0$ a.s.
This shows \eqref{b2} and completes the proof.
\end{proof}

\appendix

\section{Proof of Lemma \ref{lem0a}}

The three particle systems will be constructed out of a single branching random walk (with labeled particles), which is a non-local branching model without selection. In this system, the initial $N$ particles are located at $x^i$, $1\le i\le N$. Each particle has a rate-$1$ Poisson clock according to which it gives birth at location drawn according to $\rho(y,\cdot)$, $y$ being the particle's location. The first $N$ particles are labeled $1,\ldots,N$, and for $i\ge N+1$, the $i$-th particle to be born (regarding the initial $N$ particles as being born at $0$) is labeled $i$. Let the birth time of particle $i\ge1$ be denoted by $\sig_i$. Let $\tau_{i,j}$, $j\ge1$, denote the times particle $i$ gives birth, and let $\tau_i(t)=\min\{\tau_{i,j}:\tau_{i,j}>t\}$ denote the next time after $t$ when particle $i$ gives birth.
% Let $ch_i(t)$ be the label of the particle born at $\tau_i(t)$.
Let $X_i$ denote the position of particle $i$. With this notation, the configuration measure of the branching random walk is
\[
\zeta^N_t=\sum_{i\in\calQ(t)}\bdel_{X_i},
\qquad
\calQ(t)=\{i:t\ge\sig_i\}.
\]

For a nonempty finite set $S\subset\N$, denote by $\argmin S$ the particle $i\in S$ for which one has $F(X_i)\le F(X_j)$ for all $j\in S$, with ties broken according to the particle labels.

The system studied in this paper can be represented as
\[
\xi^N_t=\sum_{i\in\calR(t)}\bdel_{X_i},
\]
where $\calR(t)\subset\calQ(t)$ is defined inductively as follows.
Starting with time $0$, let $\calR(0)=\{1,\ldots,N\}$ and $\theta_0=0$. Next, for $k\ge0$, given $\theta_k$ and $\{\calR(t), t\le \theta_k\}$, define $\theta_{k+1}$ and $\{\calR(t), t\le \theta_{k+1}\}$ in the following way.

Define $\theta_{k+1}=\min\{\tau_i(\theta_k):i\in\calR(\theta_k)\}$ and let $i_{k+1}$ be the particle born at $\theta_{k+1}$. Thus, up to $\theta_{k+1}$ no particle among $\calR(\theta_k)$ gives birth, whereas at $\theta_{k+1}$, a particle in this set gives birth to particle $i_{k+1}$. Let $\calR(t)=\calR(\theta_k)$ for $t\in(\theta_k,\theta_{k+1})$ and
\[
\calR(\theta_{k+1})=\calR(\theta_k)\cup\{i_{k+1}\}- \argmin(\calR(\theta_k)\cup\{i_{k+1}\}).
\]
It is clear that $\theta_k\to\iy$ a.s.\ as $k\to\iy$, and therefore $\calR(t)$ is defined for all $t$, a.s., completing the construction of $\xi^N_t$.

Recall that the set $M$ consists of piecewise constant trajectories whose jump times are denoted by $t_j$. Let $T(t)=\min\{t_j:t_j>t\}$. For an arbitrary $m\in M$ (where $m$ will later stand for either $m^N$ or $k^N$), let
\[
\zeta^{(m),N}_t=\sum_{i\in\calS(m;t)}\bdel_{X_i}.
\]
Here, $\calS(m;t)$, abbreviated to $\calS(t)$, is constructed as follows. Initially, $\calS(0)=\{1,\ldots,N\}$, and $s_k=0$. Given $s_k$ and the construction of $\calS(t)$, $t\le s_k$, let
\[
s_{k+1}=\min\{\tau_i(s_k):i\in\calS(s_k)\}\w T(s_k).
\]
Thus at $s_{k+1}$ either (a) one of the particles in $\calS(s_k)$ gives birth, or (b) one of the $t_j$ has been reached. In case (a), denote the newborn particle by $i_{k+1}$ and let
\[
\calS(s_{k+1})=\begin{cases}\calS(s_k)\cup\{i_{k+1}\}
& \text{if } F(X_{i_{k+1}})\ge m(s_k),
\\
\calS(s_k) & \text{if } F(X_{i_{k+1}})< m(s_k).
\end{cases}
\]
Thus the new particle is accepted if its fitness is $\ge m(s_k)=m(s_{k+1})$. In case (b),
\[
\calS(s_{k+1})=\calS(s_k)-\{i\in\N:F(X_i)<m(s_{k+1})\},
\]
rejecting all particles whose fitness is $<m(s_{k+1})$. This completes the construction of $\zeta^{(m),N}$.

In view of the inequalities \eqref{b3}, the lemma will be proved once it is shown that
\begin{equation}\label{e-2}
\text{
if $m(t)<\ell^N(t)$ for all $t$ then $\calR(t)\subset\calS(t)$ for all $t$,
}
\end{equation}
\begin{equation}\label{e-1}
\text{
if $m(t)>\ell^N(t)$ for all $t$ then $\calS(t)\subset\calR(t)$ for all $t$.
}
\end{equation}
We only provide the proof of \eqref{e-2} as that of \eqref{e-1} is similar.
Arguing by contradiction, assume \eqref{e-2} is false, and let $s$ be the first time it is violated. That is, at $s$, for the first time, there is a particle in $\calR(s)$ that is not in $\calS(s)$. At such $s$, one of these must occur:

- A new particle whose fitness is $\ge\ell^N(s)$ has been introduced into $\calR(s)$ (and not immediately rejected), that is a child of a particle in the set $\calR(s-)$. Because $\calR(s-)\subset\calS(s-)$, this particle was also introduced into $\calS(s)$, and because $m(s)<\ell^N(s)$, it was not immediately rejected. This contradicts the assumption $\calR(s)\not\subset\calS(s)$.

- At $s$, $m$ makes a jump upward, that is, $m(s)>m(s-)$. As a result, some particles in $\calS(s-)$ are removed and do not appear in $\calS(s)$, whereas at least one of them appears in $\calR(s)$. Thus the fitness of this particle is $\ge\ell^N(s)$, which contradicts the fact that it also must be $<m(s)$.

This completes the proof of Lemma \ref{lem0a}.
\qed

\vspace{1em}

\noi
{\bf Acknowledgment.}
I thank two anonymous referees for their thoughtful and helpful suggestions.
This research was supported by ISF grant 1035/20.

\bibliographystyle{is-abbrv}

\bibliography{revision1}

\end{document}